\newtheorem{theorem}{Theorem}
\newtheorem{corollary}[theorem]{Corollary}
\newtheorem{proposition}[theorem]{Proposition}
\newtheorem{lemma}[theorem]{Lemma}
\newtheorem{definition}[theorem]{Definition}
\newtheorem{example}[theorem]{Example}
\newcommand{\plat}{{\rm plat\,}}
\newcommand{\des}{{\rm des\,}}
\newcommand{\cc}{{\mathcal C}}
\newcommand{\D}{\mathfrak{D}}
\newcommand{\msn}{\mathfrak{S}_n}
\newcommand{\lrf}[1]{\lfloor #1\rfloor}
\newcommand{\mpp}{{\mathcal P}}
\newcommand{\z}{ \mathbb{Z}}
\newcommand{\asc}{{\rm asc\,}}
\DeclareMathOperator{\R}{\mathbb{R}}
\newcommand{\Stirling}[2]{\genfrac{\{}{\}}{0pt}{}{#1}{#2}}
\title{Eulerian polynomials for multipermutations and signed multipermutations}
\author[S.-M.~Ma]{Shi-Mei Ma}
\address{School of Mathematics and Statistics,
        Northeastern University at Qinhuangdao,
         Hebei 066000, P.R. China}
\email{shimeimapapers@163.com (S.-M. Ma)}
\author[J.~Ma]{Jun Ma}
\address{Department of mathematics, Shanghai Jiao Tong University, Shanghai, P.R. China}
\email{majun904@sjtu.edu.cn(J.~Ma)}
\author[Y.-N. Yeh]{Yeong-Nan Yeh}
\address{Institute of Mathematics,
        Academia Sinica, Taipei, Taiwan}
\email{mayeh@math.sinica.edu.tw (Y.-N. Yeh)}
\subjclass[2010]{Primary 05A05; Secondary 26C05}
\begin{document}

\maketitle
\begin{abstract}
In this paper, we study Eulerian polynomials for permutations and signed permutations of the multiset $\{1,1,2,2,\ldots,n,n\}$. Properties of these polynomials, including recurrence relations and unimodality are discussed. In particular, we give a unified proof of the fact that these polynomials
are unimodal with modes in the middle.
\bigskip

\noindent{\sl Keywords}: Eulerian polynomials, Unimodality, Gamma-positivity, Bi-gamma-positivity
\end{abstract}
\date{\today}
\section{Introduction}
Let $[n]=\{1,2,\ldots,n\}$.
Let $\msn$ denote the symmetric group of all permutations of $[n]$ and let $\pi=\pi_1\cdots\pi_n\in\msn$.
A {\it descent} of $\pi$ is an index $i\in[n-1]$ such that $\pi_i>\pi_{i+1}$. Let $\des(\pi)$ denote the number of descents of $\pi$.
The {\it type $A$ Eulerian polynomial} is defined by
$$A_n(x)=\sum_{\pi\in\msn}x^{\des(\pi)}.$$
Let $\pm[n]=[n]\cup\{-1,-2,\ldots,-n\}$.
Denote by $B_n$ the hyperoctahedral group of rank $n$.
Elements of $B_n$ are signed permutations of $\pm[n]$ with the property that $\pi(-i)=-\pi(i)$ for all $i\in [n]$.
The {\it type $B$ Eulerian polynomial} is defined by
$$B_n(x)=\sum_{\pi\in B_n}x^{\des_B(\pi)},$$
where
$\des_B(\pi)=\#\{i\in\{0,1,2,\ldots,n-1\}: ~\pi(i)>\pi({i+1})\}$ and $\pi(0)=0$ (see~\cite{Brenti94}).

Let $f(x)=\sum_{i=0}^nf_ix^i$ be a polynomial with nonnegative coefficients.
We say that $f(x)$ is {\it unimodal} if there exists an index $m$ such that $f_0\leq f_1\leq \cdots \leq f_{m-1}\leq f_m\geq f_{m+1}\geq\cdots \geq f_n$.
Such an index $m$ is called a {\it mode} of $f(x)$.
If $f(x)$ is symmetric, then it can be expanded as
$$f(x)=\sum_{k=0}^{\lrf{{n}/{2}}}\gamma_kx^k(1+x)^{n-2k},$$ and it is said to be {\it $\gamma$-positive}
if $\gamma_k\geq 0$ for $0\leq k\leq \lrf{\frac{n}{2}}$.
The $\gamma$-positivity of $f(x)$ implies that $f(x)$ is unimodal and symmetric.
It is well known that Eulerian polynomials of types $A$ and $B$ are both $\gamma$-positive and so $A_n(x)$ and $B_n(x)$ are both unimodal with modes in the middle, see~\cite{Athanasiadis17,Ma19} and references therein.
In the past decades, the $\gamma$-positivity of enumerative polynomials have been extensively studied in combinatorial and geometric contexts (see, e.g.,~\cite{Gal05,Zeng12}).

Let $\rm s=\{s_i\}_{i\geq1}$ be a sequence of positive integers.
A geometric interpretation of Eulerian polynomials is obtained by considering the
$\rm{s}$-lecture hall polytope $\mpp_n^{(\rm{s})}$,
which is defined by
$$\mpp_n^{(\rm{s})}=\left\{(\lambda_1,\lambda_2,\ldots,\lambda_n)\in\mathbb{R}^n~\big{|}~ 0\leq\frac{\lambda_1}{s_1}\leq\frac{\lambda_2}{s_2}\leq\cdots \leq\frac{\lambda_n}{s_n}\leq 1\right\}.$$
Set $e_0=0$ and $s_0=1$.
Let $\rm{I}_n^{(\rm s)}=\{e=(e_1,\ldots,e_n)\in\z^n \mid 0\leq e_i<s_i ~\text{for $i\in[n]$}\}$ be
the set of $n$-dimensional {\it$\rm s$-inversion sequences}.
The {\it $\rm s$-Eulerian polynomial} is defined as
$$E_n^{(\rm s)}(x)=\sum_{e\in \rm{I}_n^{(\rm s)}}x^{\asc(e)},$$ where
$\asc(e)=\#\left\{i\in\{0,1,2,\ldots,n-1\}\big{|}\frac{e_i}{s_i}<\frac{e_{i+1}}{s_{i+1}}\right\}$.
In particular, we have
$$E_n^{(1,2,\ldots,n)}(x)=A_n(x).$$
It is well known~\cite{Savage1202} that the $\rm{s}$-Eulerian polynomial is the $h^*$-polynomial of $\mpp_n^{(\rm{s})}$.
Savage and Visontai~\cite{Savage15} showed that for any sequence $\rm{s}$ of positive integers,
the $\rm s$-Eulerian polynomial $E_n^{(\rm s)}(x)$ has only real zeros.
Since unimodality is implied by real-rootedness, thus $E_n^{(\rm s)}(x)$ is unimodal for any sequence $\rm{s}$ of positive integers.
As pointed out by Savage and Visontai~\cite[Remark 4.2]{Savage15}, $E_n^{(\rm s)}(x)$ need not be symmetric. Thus a natural problem to determine the modes of $E_n^{(\rm s)}(x)$ for some $\rm s$.
In this paper, we shall determine the modes of Eulerian polynomials for multipermutations and signed multipermutations.

For any $n$-element multiset $M$, a {\it descent} (resp. {\it ascent}, {\it plateau}) in a multiset permutation $\sigma$ of $M$
is an index $i$ such that $\sigma_i>\sigma_{i+1}$ (resp. $\sigma_i<\sigma_{i+1}$, $\sigma_i=\sigma_{i+1}$), where $i\in[n-1]$.
Let $\des(\sigma)$ (resp.~$\asc(\sigma)$, $\plat(\sigma)$) denote the number of descents (resp. ascents, plateaus) of $\sigma$, i.e.,
\begin{align*}
\des(\sigma)&=\{i\in [n-1]: \sigma_i>\sigma_{i+1}\},\\
\asc(\sigma)&=\{i\in [n-1]: \sigma_i<\sigma_{i+1}\},\\
\plat(\sigma)&=\{i\in [n-1]: \sigma_i=\sigma_{i+1}\}.
\end{align*}
Let $[n]_2=\{1,1,2,2,\ldots,n,n\}$.
Denote by $\cc_n$ the set of permutations of $[n]_2$. Let $$[\overline{n}]_2=[n]_2 \cup\{n+1\}=\{1,1,2,2,\ldots,n,n,n+1\}.$$
Let $\D_n$ denote the set of permutations of $[\overline{n}]_2$.
We define
\begin{align*}
P_n(x)&=\sum_{\sigma \in\cc_n}x^{\des(\sigma)}=\sum_{k=0}^{2n-2}P_{n,k}x^k,\\
Q_n(x)&=\sum_{\sigma \in \D_n}x^{\des(\sigma)}=\sum_{k=0}^{2n-1}Q_{n,k}x^k.
\end{align*}
In particular,
$P_1(x)=1,Q_1(x)=1+2x,P_2(x)=1+4x+x^2,Q_2(x)=1+12x+15x^2+2x^3$.
Let $\rm{s}=(1,1,3,2,5,\ldots)$, where $s_{2i}=i,s_{2i-1}=2i-1$ for $i\geq 1$.
By using the identity $$\sum_{k\geq 0}\binom{k+2}{2}^nx^k=\frac{{P_n(x)}}{(1-x)^{2n+1}},$$
Savage and Visontai~\cite[Theorem~3.23]{Savage15} showed that
\begin{equation}\label{Pnxdef}
P_n(x)=E_{2n}^{(1,1,3,2,\ldots,2n-1,n)}(x).
\end{equation}
And so $P_n(x)$ has only real zeros for any $n\geq 1$.
According to~\cite[Remark 3.1.1]{Gal05}, we have the following result.
\begin{proposition}\label{gamma}
If $f(x)$ is symmetric and has only real negative zeros,
then $f(x)$ is $\gamma$-positive.
\end{proposition}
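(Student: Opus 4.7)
My plan is to exploit the root structure of $f(x)$ directly. Since $f$ has only real negative zeros, write $f(x)=c\prod_{j=1}^{n}(x+r_j)$ with every $r_j>0$; because $f$ is symmetric in the sense that $f_i=f_{n-i}$, it satisfies $x^n f(1/x)=f(x)$, which forces the multiset of roots $\{-r_j\}$ to be invariant under the involution $r\mapsto 1/r$. Fixed points of this involution are precisely $r=1$, so the number of roots equal to $-1$ has the same parity as $n$. I would then group the roots into an even number of reciprocal pairs $\{r,1/r\}$ with $r\neq 1$, together with an appropriate power $(x+1)^{a}$ absorbing all roots equal to $-1$, obtaining the factorization
\[
f(x)=c\,(x+1)^{a}\prod_{j=1}^{m}(x+r_j)(x+1/r_j),\qquad a+2m=n.
\]

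Next I would use the key identity
\[
(x+r)(x+1/r)=(x+1)^2+(r+1/r-2)x,
\]
and observe that $\alpha_j:=r_j+1/r_j-2\geq 0$ by AM--GM. Substituting, the product over $j$ expands as
\[
\prod_{j=1}^{m}\bigl((x+1)^2+\alpha_j x\bigr)=\sum_{S\subseteq[m]}\Bigl(\prod_{j\in S}\alpha_j\Bigr)x^{|S|}(x+1)^{2m-2|S|},
\]
so that
\[
f(x)=\sum_{k=0}^{m}c\,e_k(\alpha_1,\ldots,\alpha_m)\,x^k(x+1)^{n-2k},
\]
where $e_k$ denotes the $k$th elementary symmetric function. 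The coefficient $c=f_n$ is positive (it equals a sum of products of the positive quantities $r_j$), and each $e_k(\alpha_1,\ldots,\alpha_m)$ is a nonnegative combination of nonnegative numbers. Thus $\gamma_k=c\,e_k(\alpha_1,\ldots,\alpha_m)\geq 0$, which is exactly $\gamma$-positivity.

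There is essentially no obstacle here; the only slightly subtle point is justifying that the roots pair up correctly under $r\mapsto 1/r$, in particular that $a$ and $n$ have the same parity so that $m$ is a nonnegative integer. I would handle this by counting multiplicities: the involution fixes only the root $-1$, so all other roots come in reciprocal pairs, forcing the multiplicity of $-1$ to match the parity of $n$. Once the factorization is in place, the rest is a one-line expansion, and Proposition~\ref{gamma} follows.
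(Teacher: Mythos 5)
Your argument is correct and is the standard proof of this fact: pairing the negative real zeros under $r\mapsto 1/r$, using $(x+r)(x+1/r)=(x+1)^2+(r+1/r-2)x$ with $r+1/r-2\geq 0$, and expanding the product to read off $\gamma_k=c\,e_k(\alpha_1,\ldots,\alpha_m)\geq 0$. The paper gives no proof at all --- it simply cites Gal's Remark 3.1.1, which rests on exactly this reciprocal-pairing argument --- so there is nothing to contrast; the only nitpick is your parenthetical justification that $c>0$: the cleaner reason is that $c=f_n=f_0$ is a coefficient of $f$, which is assumed to have nonnegative coefficients (and is nonzero since $\deg f=n$).
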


It should be noted that the polynomial $P_n(x)$ has been studied by Carlitz and Hoggatt~\cite{Carlitz78}.
They found that $P_n(x)$ is a symmetric polynomial. Then combining~\eqref{Pnxdef} and Proposition~\ref{gamma}, we get the following result.
\begin{proposition}\label{Pnxgamma}
For any $n\geq 1$, the polynomial $P_n(x)$ is $\gamma$-positive, and so it is unimodal with mode in the middle.
\end{proposition}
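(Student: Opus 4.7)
The plan is to assemble the three inputs already displayed in the excerpt. First, identity~\eqref{Pnxdef} writes $P_n(x)$ as the $s$-Eulerian polynomial $E_{2n}^{(1,1,3,2,\ldots,2n-1,n)}(x)$, and the Savage--Visontai theorem~\cite{Savage15} asserts that every $s$-Eulerian polynomial has only real zeros; hence $P_n(x)$ is real-rooted. Because $P_n(x)$ has nonnegative integer coefficients with $P_{n,0}=1$ (the sorted multipermutation $1122\cdots nn$ is the unique element of $\cc_n$ with no descents), its real zeros must all be strictly negative.

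Next, I would invoke the result of Carlitz and Hoggatt~\cite{Carlitz78} that $P_n(x)$ is palindromic, i.e.\ $P_{n,k}=P_{n,2n-2-k}$ for $0\leq k\leq 2n-2$. With symmetry and only negative real zeros in hand, Proposition~\ref{gamma} applies and yields a $\gamma$-expansion
$$P_n(x)=\sum_{k=0}^{n-1}\gamma_k\,x^k(1+x)^{2n-2-2k}$$
with all $\gamma_k\geq 0$. As noted immediately after the definition of $\gamma$-positivity, this already forces $P_n(x)$ to be unimodal; moreover the mode of each summand $x^k(1+x)^{2n-2-2k}$ lies at index $n-1$, which is the middle of $\{0,1,\ldots,2n-2\}$, so the mode of $P_n(x)$ itself is in the middle as claimed.

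There is essentially nothing further to verify: the two substantive ingredients, real-rootedness (Savage--Visontai) and palindromicity (Carlitz--Hoggatt), are cited as black boxes. The only conceptual subtlety worth flagging is why the symmetry of $P_n(x)$ must be imported from external work rather than proved by an elementary bijection. The natural involutions on multipermutations of $[n]_2$ -- reversal $\sigma\mapsto\sigma_{2n}\cdots\sigma_1$ and complementation $\sigma\mapsto(n+1-\sigma_i)$ -- each swap $\des$ with $\asc$ rather than $\des$ with $\des$, and neither controls the plateau statistic; since $\des+\asc+\plat=2n-1$, the sum of $\des(\sigma)$ and $\des$ of the image depends on $\plat(\sigma)$ and fails to give a $2n-2$ complement on the nose. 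That is precisely why palindromicity is non-obvious by hand and has to be pulled from~\cite{Carlitz78} in order to close the argument.
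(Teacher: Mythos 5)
Your argument is correct and is essentially the same as the one the paper itself gives at the point where Proposition~\ref{Pnxgamma} is stated: combine \eqref{Pnxdef} with the Savage--Visontai real-rootedness theorem~\cite{Savage15}, import the symmetry of $P_n(x)$ from Carlitz--Hoggatt~\cite{Carlitz78}, and apply Proposition~\ref{gamma}; your side remark that reversal and complementation fail to prove the symmetry directly (because they exchange $\des$ with $\asc$ while $\des+\asc+\plat=2n-1$) is also accurate. Note only that the paper additionally supplies, in Section~\ref{Section3}, a second self-contained ``direct proof'' via the context-free grammars of Lemma~\ref{lemma02}, which avoids both black boxes and produces an explicit recurrence for the $\gamma$-coefficients $p_{n,k}$.
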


It follows from~\cite[Eq.~(2.8)]{Carlitz78} that
the numbers $P_{n,k}$ satisfy the recurrence relation
\begin{equation}\label{Pnk-recurrence}
P_{n+1,k}=\binom{k+2}{2}P_{n,k}+(k+1)(2n-k+1)P_{n,k-1}+\binom{2n-k+2}{2}P_{n,k-2}.
\end{equation}
with $P_{1,0}=1$ and $P_{1,k}=0$ for $k\neq0$.
From~\cite[Theorem~14]{Savage1202}, we see that
$$\sum_{t\geq 0}(t+1)^{n+1}\left(\frac{t+2}{2}\right)^{n}x^t=\frac{E_{2n+1}^{(1,1,3,2,5,3,7,\ldots,2n-1,n,2n+1)}(x)}{(1-x)^{2n+2}}.$$
A classical result of MacMahon~\cite[Vol~2, Chapter IV, p.~211]{MacMahon20} says that
$$\frac{\sum_{\pi\in P(\{1^{p_1},2^{p_2},\ldots,n^{p_n}\})}x^{\des(\pi)}}{(1-x)^{1+\sum_{i=1}^n}p_i}
=\sum_{t\geq 0}\frac{(t+1)\cdots(t+p_1)\cdots(t+1)\cdots(t+p_n)}{p_1!\cdots p_n!}x^t.$$
where $P(\{1^{p_1},2^{p_2},\ldots,n^{p_n}\})$ denote the set of multipermutations of $\{1^{p_1},\ldots,n^{p_n}\}$. Therefore,
when $p_1=\cdots=p_n=2$ and $p_{n+1}=1$, we immediately obtain
$$Q_n(x)=E_{2n+1}^{(1,1,3,2,\ldots,2n-1,n,2n+1)}(x).$$

The number of {\it descents} and {\it ascents} of a signed permutation $\sigma=\sigma_1\sigma_2\cdots \sigma_n$ are defined as
\begin{align*}
\des_B(\sigma)&=\{i\in\{0,1,2,\ldots,n-1\}: \sigma_i>\sigma_{i+1}\},\\
\asc_B(\sigma)&=\{i\in\{0,1,2,\ldots,n-1\}: \sigma_i<\sigma_{i+1}\}.
\end{align*}
where $\sigma_0=0$.
Let $\cc_n^{\pm}$ be the set of all signed permutations of the multiset $[n]_2$. Elements of $\cc_n^{\pm}$ are those of the form
$\pm\sigma_1\pm \sigma_2\cdots \pm \sigma_{2n}$, where $\sigma_1\cdots \sigma_{2n}\in\cc_n$.
We write $-i$ by
$\overline{i}$ for each $i\in [n]$.
Let $\rm{s}=(1,4,\ldots,2n-1,4n)$, where $s_{2i}=4i,s_{2i-1}=2i-1$ for $i\geq 1$.
The following result was first conjectured by Savage and Visontai~\cite[Conjecture 3.25]{Savage15}, and solved
independently by Chen et al.~\cite{Chen16} and Lin~\cite{Lin1501}:
$$\sum_{\sigma \in\cc^{\pm}_n}x^{\des_B(\sigma)}=E_{2n}^{(1,4,3,8,\ldots,2n-1,4n)}(x).$$

Let $\D^{\pm}_n$ be the subset of signed permutations of $[\overline{n}]_2$ consisting of signed permutations such that the
element $n+1$ with a positive sign.
Thus any element of $\D^{\pm}_n$ can be
generated from an element of $\cc_n^{\pm}$ by inserting the entry $n+1$.
For example,
$$\D^{\pm}_1=\{112,\overline{1}12,1\overline{1}2,\overline{1}~\overline{1}2,121,\overline{1}21,12\overline{1},\overline{1}2\overline{1},211,2\overline{1}1,
21\overline{1},2\overline{1}~\overline{1}\}.$$
Clearly, $\#\D^{\pm}_n=(2n+1)\#\cc_n^{\pm}$.
We define
\begin{align*}
S_n(x)&=\sum_{\sigma \in\cc^{\pm}_n}x^{\des_B(\sigma)}=\sum_{i=0}^{2n-1}S_{n,i}x^i,\\
T_n(x)&=\sum_{\sigma \in \D^{\pm}_n}x^{\des_B(\sigma)}=\sum_{k=0}^{2n}T_{n,k}x^k.
\end{align*}
In particular,
\begin{align*}
S_1(x)&=1+3x,\\
T_1(x)&=1+8x+3x^2,\\
S_2(x)&=1+31x+55x^2+9x^3,\\
T_2(x)&=1+66x+258x^2+146x^3+9x^4,\\
S_3(x)&=1+209x+1884x^2+2828x^3+811x^4+27x^5.
\end{align*}
Following~\cite[Lemma~4]{Lin1501}, the numbers $S_{n,i}$ satisfy the recurrence relation
\begin{equation}\label{Snk-recu}
S_{n+1,i}=\binom{2i+2}{2}S_{n,i}+(2i(4n-2i+3)+2n+1)S_{n,i-1}+\binom{4n-2i+5}{2}S_{n,i-2},
\end{equation}
with the initial conditions $S_{1,0}=1,S_{1,1}=3$ and $S_{1,i}=0$ for $i<0$ or $i>1$.

Let $V_n$ be subset of signed permutations of $[\overline{n}]_2$ consisting of signed permutations such that the
element $n+1$ carries a minus sign. For $\sigma\in V_n$,
let $\des_r(\sigma)=\{i\in [2n+1]\mid \sigma_i>\sigma_{i+1}\}$,
where $\sigma_{2n+2}=0$.
Chen et al.~\cite[Theorem~3.1]{Chen16} proved that
$$\sum_{\sigma \in\D^{\pm}_n}x^{\des_r(\sigma)}=E_{2n+1}^{(1,4,3,8,\ldots,2n-1,4n,2n+1)}(x).$$
Using the bijections
$$\sigma=\sigma_1\sigma_2\cdots\sigma_n\mapsto \sigma'=\sigma_n\sigma_{n-1}\cdots\sigma_1\mapsto (-\sigma_n)\cdots(-\sigma_1),$$
we immediately obtain
$$\sum_{\sigma \in\D^{\pm}_n}x^{\des_r(\sigma)}=\sum_{\sigma \in\D^{\pm}_n}x^{\des_B(\sigma)}.$$
Hence
$$\sum_{\sigma \in\D^{\pm}_n}x^{\des_B(\sigma)}=E_{2n+1}^{(1,4,3,8,\ldots,2n-1,4n,2n+1)}(x).$$

The main purpose of this paper is to prove the following result.
\begin{theorem}\label{unimodal}
For any $n\geq 1$, the polynomials $Q_n(x),S_n(x)$ and $T_n(x)$ are all unimodal with modes in the middle.
\end{theorem}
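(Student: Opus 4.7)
Plan. I will prove that each of $Q_n$, $S_n$, $T_n$ is bi-$\gamma$-positive---that is, each admits a symmetric decomposition $f(x)=a(x)+xb(x)$ in which both $a$ (palindromic of degree $\deg f$) and $b$ (palindromic of degree $\deg f -1$) are $\gamma$-positive. Since $f_i=a_i+b_{i-1}$ and each of $(a_i),(b_{i-1})$ is unimodal with its peak at the centre of $\{0,\ldots,\deg f\}$, bi-$\gamma$-positivity implies that $f$ is unimodal with mode in the middle.

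For $Q_n$, classifying the $2n+1$ positions into which the entry $n+1$ can be inserted into $\tau\in\cc_n$ by their effect on $\des$ yields
$$Q_n(x)=(1+2nx)P_n(x)+x(1-x)P_n'(x).$$
Introduce the linear operator $L_d:=(1+(d+1)x)+x(1-x)\tfrac{\partial}{\partial x}$. A direct computation in the $\gamma$-basis $u_k^d:=x^k(1+x)^{d-2k}$ gives
$$L_d(u_k^d)=(k+1)\,u_k^{d+1}+2(d-2k)\,u_{k+1}^{d+1},$$
so $L_d$ sends $\gamma$-positive palindromes of degree $d$ to $\gamma$-positive palindromes of degree $d+1$. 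Setting $a_n:=L_{2n-2}(P_n)=Q_n-xP_n$ yields the symmetric decomposition $Q_n=a_n+x\cdot P_n$, in which $P_n$ is $\gamma$-positive by Proposition~\ref{Pnxgamma} and $a_n$ is $\gamma$-positive by the operator identity above; hence $Q_n$ is bi-$\gamma$-positive.

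For $T_n$, the same insertion argument (applied now to $\sigma\in\cc_n^{\pm}$) gives $T_n(x)=(1+2nx)S_n(x)+x(1-x)S_n'(x)$, i.e.\ $T_n=L_{2n-1}(S_n)$. Writing $S_n=\alpha_n+x\beta_n$ for the symmetric decomposition of $S_n$ (with $\alpha_n$ palindromic of degree $2n-1$ and $\beta_n$ palindromic of degree $2n-2$), the identity $L_{2n-1}(x\beta_n)=x\beta_n+xL_{2n-2}(\beta_n)$ then gives
$$T_n=\bigl[L_{2n-1}(\alpha_n)+x\beta_n\bigr]+x\cdot L_{2n-2}(\beta_n).$$
Since $L_{2n-1}(\alpha_n)$ is palindromic of degree $2n$, $x\beta_n$ is palindromic of degree $2n$ (because $\beta_n$ is palindromic of degree $2n-2$), and $L_{2n-2}(\beta_n)$ is palindromic of degree $2n-1$, this is the symmetric decomposition of $T_n$; both pieces are $\gamma$-positive provided $\alpha_n$ and $\beta_n$ are.

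\textbf{Main obstacle.} The argument for $S_n$ and $T_n$ reduces to proving that $\alpha_n$ and $\beta_n$ are themselves $\gamma$-positive, and since neither is an $s$-Eulerian polynomial, Proposition~\ref{gamma} does not apply directly. My plan is to derive a coupled recurrence for the pair $(\alpha_n,\beta_n)$ from the recurrence \eqref{Snk-recu} for $S_{n,i}$ and to prove $\gamma$-positivity by induction on $n$, showing that the recurrence preserves $\gamma$-positivity of each piece. An alternative is to give each of $\alpha_n,\beta_n$ a direct combinatorial interpretation on subfamilies of $\cc_n^{\pm}$ (analogous to $a_n(x)=\sum_{\sigma\in\D_n,\,\sigma_1\neq n+1}x^{\des(\sigma)}$, which follows from the insertion argument for $Q_n$), and to read off the $\gamma$-expansions via a context-free grammar and a change of grammars.
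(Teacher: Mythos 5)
Your overall strategy (bi-$\gamma$-positivity via the symmetric decomposition) is exactly the paper's, and your treatment of $Q_n$ is complete and correct: the operator identity $L_d\bigl(x^k(1+x)^{d-2k}\bigr)=(k+1)x^k(1+x)^{d+1-2k}+2(d-2k)x^{k+1}(1+x)^{d-1-2k}$ checks out, and $a_n=L_{2n-2}(P_n)$ is precisely the paper's $R_n(x)=(1+(2n-1)x)P_n(x)+x(1-x)P_n'(x)$, with your $\gamma$-coefficients $(k+1)p_{n,k}+4(n-k)p_{n,k-1}$ agreeing with the paper's $r_{n,k}$. This is a cleaner route to Theorem~\ref{multipermu01} than the paper's grammar computation. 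Likewise your reduction of $T_n$ to $S_n$ via $T_n=L_{2n-1}(S_n)$ and the identity $L_{2n-1}(x\beta_n)=x\beta_n+xL_{2n-2}(\beta_n)$ is correct and reproduces the paper's recurrences for $\eta^{\pm}_{2m+1,k}$ in terms of $\eta^{\pm}_{2m,k}$.

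The genuine gap is the one you flag yourself: you never prove that the symmetric decomposition $S_n=\alpha_n+x\beta_n$ has $\gamma$-positive pieces, and this is the hard half of the theorem. Your proposed route --- extracting a coupled recurrence for $(\alpha_n,\beta_n)$ directly from \eqref{Snk-recu} --- is not routine, because a second-order recurrence on the coefficients $S_{n,i}$ does not interact transparently with the symmetric decomposition. The paper instead factors the passage $S_n\to S_{n+1}$ through $T_n$ using $S_{n+1}(x)=(1+(4n+3)x)T_n(x)+2x(1-x)T_n'(x)$, and here a new phenomenon appears that your operator calculus does not yet handle: the operator $(1+(2d+3)x)+2x(1-x)\frac{\partial}{\partial x}$ applied to $x^k(1+x)^{d-2k}$ produces $(1+2k)x^k(1+x)^{d+1-2k}+(4d+2-8k)x^{k+1}(1+x)^{d-1-2k}+2x\cdot x^{k+1}(1+x)^{d-1-2k}$, i.e.\ it does \emph{not} preserve palindromicity but instead shifts mass between the two halves of the decomposition. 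Concretely, in the paper's inductive scheme (equation \eqref{recu-signed0001} and the recurrences for $\eta^{\pm}_{2m+2,k}$) the ``$+$'' piece of $T_n$ feeds into the ``$-$'' piece of $S_{n+1}$ via the term $2\eta^+_{2m+1,k}$, so the induction must be run jointly on the pair of $\gamma$-vectors rather than on each piece separately. Until you either carry out this coupled induction or supply the combinatorial interpretation $S_n^{+}(x)=\sum_{\sigma\in\cc_n^{\pm},\,\sigma_{2n}>0}x^{\des(\sigma)}$ together with a proof of its $\gamma$-positivity, the unimodality of $S_n$ and $T_n$ remains unproved.
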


In the next section, we recall some definitions that will be used throughout the rest of this paper.
In Section~\ref{Section3}, we prove Theorem~\ref{unimodal}. Moreover, combining~\eqref{QnxRnx} and Lemma~\ref{lemma02} as well as~\eqref{snk} and~\eqref{tnk}, it is easy to verify the following result.
\begin{theorem}\label{recurrencesystem}
Set $P_0(x)=Q_0(x)=S_0(x)=T_0(x)=1$. Then for $n\geq 0$, we have
\begin{equation*}
\left\{
  \begin{array}{l}
   Q_n(x)=(1+2nx)P_n(x)+x(1-x)P_n'(x),\\
P_{n+1}(x)=(1+nx)Q_n(x)+\frac{1}{2}x(1-x)Q_n'(x),
  \end{array}
\right.
\end{equation*}
and
\begin{equation*}
\left\{
  \begin{array}{l}
 T_n(x)=(1+2nx)S_n(x)+x(1-x)S_n'(x),\\
S_{n+1}(x)=(1+3x+4nx)T_n(x)+2x(1-x)T_n'(x).
  \end{array}
\right.
\end{equation*}
\end{theorem}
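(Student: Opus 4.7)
The plan is to prove each of the four identities by a combinatorial insertion argument that tracks how descents change when one inserts a new maximal letter $n+1$ (in the unsigned case) or signed letters $\pm(n+1)$ (in the signed case). The first three identities follow from a direct insertion, while the fourth requires an additional sign-flip correction to account for preimages that the natural insertion fails to reach.

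For the identity $Q_n(x) = (1+2nx) P_n(x) + x(1-x) P_n'(x)$, every $\tau \in \D_n$ is obtained uniquely by inserting $n+1$ into some $\sigma \in \cc_n$ at one of $2n+1$ slots. Since $n+1$ is the maximum of $[\overline{n}]_2$, a short case analysis shows that inserting at any of the $k = \des(\sigma)$ descent positions or at the right end preserves $\des$, while inserting at any of the $2n-k$ ascent, plateau, or front positions raises $\des$ by one. Collecting $\sum_{\sigma}\bigl((k+1)x^k + (2n-k)x^{k+1}\bigr)$ and rewriting in derivative form yields the identity. The identity $P_{n+1}(x) = (1+nx) Q_n(x) + \tfrac{1}{2} x(1-x) Q_n'(x)$ is proved the same way, inserting $n+1$ into $\tau \in \D_n$; the factor $\tfrac{1}{2}$ reflects that each $\pi \in \cc_{n+1}$ has exactly two preimages under removal of a copy of $n+1$. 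The identity $T_n(x) = (1+2nx) S_n(x) + x(1-x) S_n'(x)$ is the direct signed analogue of the first, obtained by inserting $+(n+1)$ (the new maximum) into $\sigma \in \cc^{\pm}_n$; the convention $\sigma_0 = 0$ makes the slot before $\sigma_1$ a descent-counting position (ascent if $\sigma_1 > 0$, descent if $\sigma_1 < 0$), and the same tally $k+1$ vs.\ $2n-k$ persists.

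For the fourth identity $S_{n+1}(x) = (1+3x+4nx) T_n(x) + 2x(1-x) T_n'(x)$, the natural insertion map fails: inserting $\pm(n+1)$ into $\tau \in \D^{\pm}_n$ never produces a $\pi \in \cc^{\pm}_{n+1}$ whose two copies of $n+1$ are both negative. Let $A(x), B(x)$ denote the contributions of inserting $+(n+1)$ and $-(n+1)$ respectively. Inserting $+(n+1)$: since $\tau$ already contains a $+(n+1)$, the two slots adjacent to this element yield plateaus and hence no descent change, giving a tally of $k+2$ slots preserving $\des_B$ and $2n-k$ raising it. Inserting $-(n+1)$: as the new minimum, it creates a descent whenever placed in an ascent or plateau slot, and also at the new rightmost slot, giving a tally of $k$ slots preserving $\des_B$ and $2n+2-k$ raising it. Summing yields an explicit formula for $A(x)+B(x)$ in terms of $T_n(x)$ and $T_n'(x)$. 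Partition $\cc^{\pm}_{n+1}$ by the two signs on its copies of $n+1$ and let $S^{++}(x), S^{+-}(x), S^{--}(x)$ be the corresponding contributions to $S_{n+1}(x)$; the preimage counts of $\pi$ under the insertion map are $2,1,0$ respectively, so $A(x)+B(x) = 2 S^{++}(x) + S^{+-}(x)$, and thus $S_{n+1}(x) = \bigl(A(x)+B(x)\bigr) - \bigl(S^{++}(x) - S^{--}(x)\bigr)$.

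The crux is to show $S^{++}(x) - S^{--}(x) = (1-x) T_n(x)$. I would prove it via the involution on $\cc^{\pm}_{n+1}$ that negates both copies of $n+1$, bijecting the $++$ class with the $--$ class. Writing $j_1 < j_2$ for the positions of $n+1$ in $\pi$, only the pairs at slots $j_1-1, j_1, j_2-1, j_2$ are affected by the involution. A case analysis shows that the ascent-to-descent change at slot $j_1-1$ cancels the descent-to-ascent change at slot $j_1$, and likewise at $j_2-1$ versus $j_2$, whenever $j_2 < 2n+2$; however, when $j_2 = 2n+2$ the canceling partner at slot $j_2$ lies past the end of $\pi$, leaving a net gain of $+1$ descent. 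Hence $S^{--}(x) - S^{++}(x) = (x-1) F(x)$ where $F(x)$ sums $x^{\des_B(\pi)}$ over $\pi$ in the $++$ class with $\pi_{2n+2} = n+1$; removing the trailing $n+1$ is a bijection onto $\D^{\pm}_n$ that preserves $\des_B$ (the deleted pair is never a descent), so $F(x) = T_n(x)$. Substituting into $S_{n+1} = (A+B) - (S^{++}-S^{--})$ and collecting terms yields the fourth identity. The main obstacle is this sign-flip case analysis: while interior cancellations are immediate, the careful treatment of the boundary slots, in particular the $\sigma_0 = 0$ convention at the left and the absence of a descent-counting slot past position $2n+2$ at the right, is what pins down the correction as exactly $(1-x) T_n(x)$.
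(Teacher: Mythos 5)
Your proposal is correct, but it takes a genuinely different route from the paper. The paper obtains all four identities as byproducts of its context-free-grammar machinery: the first two are read off from the expansions \eqref{Pnk-grammar}--\eqref{Qnk-grammar} by applying the operators $D_3$ and $D_4$ (this is what produces \eqref{QnxRnx} and the recurrence for $P_{n+1}$), and the last two from \eqref{snk}--\eqref{tnk} by applying $D_5$ and $D_6$; only the identity $T_{n,k}=(k+1)S_{n,k}+(2n-k+1)S_{n,k-1}$ is argued in the paper by direct insertion, exactly as you do. You instead prove everything by insertion and double counting, and your treatment of the fourth identity --- correcting the $2$-to-$1$/$1$-to-$1$/$0$-to-$1$ preimage count by the sign-reversal involution on the two copies of $n+1$, with the boundary slot $j_2=2n+2$ producing exactly the $(1-x)T_n(x)$ discrepancy --- is a genuinely new argument not in the paper; I checked the resulting algebra ($A+B=(2+(4n+2)x)T_n+2x(1-x)T_n'$ minus $(1-x)T_n$) and it reproduces the stated recurrence. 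What your approach buys is a transparent combinatorial meaning for each term; what the grammar approach buys is that the same computation simultaneously yields the $\gamma$- and bi-$\gamma$-expansions used for Theorem~\ref{unimodal}. One presentational slip: the second identity is \emph{not} proved ``the same way'' as the first. Since $\tau\in\D_n$ already contains a copy of $n+1$, the ascent slot immediately to its left also preserves the descent number (insertion there creates a plateau), so the tally is $k+2$ preserving and $2n-k$ raising slots, giving $2P_{n+1,k}=(k+2)Q_{n,k}+(2n-k+1)Q_{n,k-1}$; the naive $k+1$ versus $2n+1-k$ count would give the wrong formula. You clearly know this --- you use precisely the $k+2$ tally for $A(x)$ in the signed case --- but it should be stated explicitly for $P_{n+1}$ as well.
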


Suppose that $p,q\in\R[x]$ both have only real zeros,
that those of $p$ are $\xi_1<\cdots<\xi_n$,
and that those of $q$ are $\theta_1<\cdots<\theta_m$.
We say that $p$ {\it strictly interlaces} $q$ if $\deg q=1+\deg p$ and the zeros of
$p$ and $q$ satisfy
\begin{equation*}\label{zeros}
\theta_1<\xi_1<\theta_2<\cdots<\xi_n
<\theta_{n+1}.
\end{equation*}
We use the notation $p\prec_{int} q$ for ``$p$ strictly interlaces $q$''.
Combining Theorem~\ref{recurrencesystem} and Theorem~\cite[Theorem~2.1]{Liu07}, we immediately get the following corollary.
\begin{corollary}
For $n\geq 1$, the polynomials $P_n(x),Q_n(x),T_n(x)$ and $S_n(x)$ have only real zeros. Moreover, we have
$P_n(x)\prec_{int} Q_n(x)\prec_{int} P_{n+1}(x)$ and $S_n(x)\prec_{int} T_n(x)\prec_{int} S_{n+1}(x)$.
\end{corollary}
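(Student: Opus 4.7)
The plan is to apply the Liu--Wang criterion \cite[Theorem~2.1]{Liu07} iteratively to the two recurrence systems provided by Theorem~\ref{recurrencesystem}. That criterion says, in essence: if $F(x)$ has only real zeros, $G(x) = a(x)F(x) + b(x)F'(x)$ has degree one higher than $F$, and $b(\xi) \leq 0$ at every real zero $\xi$ of $F$, then $G$ has only real zeros and $F \prec_{int} G$. Each of the four transitions appearing in Theorem~\ref{recurrencesystem} has exactly this shape, with the multiplier of the derivative being $x(1-x)$, $\tfrac{1}{2}x(1-x)$, or $2x(1-x)$.

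I would proceed by simultaneous induction on $n$, starting from the trivial base cases $P_1(x)=1$ and $S_1(x)=1+3x$. The key observation that makes the sign hypothesis automatic is that every polynomial in both sequences has nonnegative coefficients with constant term $1$ (visible from the combinatorial definitions and, if one wants, reconfirmed by the recurrences \eqref{Pnk-recurrence} and \eqref{Snk-recu}). Consequently, whenever the inductive hypothesis places $F \in \{P_n, Q_n, S_n, T_n\}$ among the real-rooted polynomials, all its zeros must lie in $(-\infty, 0)$; on that half-line $x(1-x) < 0$, so the Liu--Wang hypothesis $b(\xi) \leq 0$ is satisfied for all three derivative coefficients listed above. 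A short degree check confirms that the derivative term never cancels the leading contribution of the first term: for $P_n$ symmetric of degree $2n-2$ with leading coefficient $1$, the top coefficient of $(1+2nx)P_n(x) + x(1-x)P_n'(x)$ equals $2n-(2n-2)=2 \neq 0$, and the analogous computations for the other three transitions are similar. This ensures that each application of Liu--Wang raises the degree by exactly one and therefore yields \emph{strict} interlacing.

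Iterating the $P/Q$ system produces the chain $P_1 \prec_{int} Q_1 \prec_{int} P_2 \prec_{int} Q_2 \prec_{int} \cdots$, and iterating the $S/T$ system produces $S_1 \prec_{int} T_1 \prec_{int} S_2 \prec_{int} T_2 \prec_{int} \cdots$, from which both the real-rootedness assertion and the claimed interlacing relations follow immediately. No step looks like a genuine obstacle; what at first glance appears most delicate---checking $b(\xi) \leq 0$ at each relevant zero---collapses as soon as one notices that the combinatorial origin of the coefficients forces all real zeros into $(-\infty, 0)$. The only real content in the corollary is the combination of the reformulation of the recurrences from Theorem~\ref{recurrencesystem} with an off-the-shelf preservation lemma.
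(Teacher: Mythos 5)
Your proof is correct and follows exactly the route the paper takes: the corollary is obtained by combining the recurrence system of Theorem~\ref{recurrencesystem} with the Liu--Wang criterion \cite[Theorem~2.1]{Liu07}. You have in fact supplied more detail than the paper does, by explicitly verifying the sign condition $b(\xi)\leq 0$ via negativity of the zeros and checking that each step raises the degree by one.
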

\section{Preliminary}
Following~\cite[Definition~2.9]{Schepers13},
we say that $f(x)$ is {\it alternatingly increasing} if
$$f_0\leq f_n\leq f_1\leq f_{n-1}\leq\cdots f_{\lrf{\frac{n+1}{2}}}.$$
Clearly, alternatingly increasing property is a stronger property than unimodality.
We now recall an elementary result.
\begin{proposition}[{\cite{Beck2010,Branden18}}]\label{prop01}
Let $f(x)$ be a polynomial of degree $n$.
There is a unique symmetric decomposition $f(x)= a(x)+xb(x)$, where \begin{equation}\label{ax-bx-prop01}
a(x)=\frac{f(x)-x^{n+1}f(1/x)}{1-x},~b(x)=\frac{x^nf(1/x)-f(x)}{1-x}.
\end{equation}
\end{proposition}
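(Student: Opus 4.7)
The plan is to prove this in three short steps: fix the convention for ``symmetric decomposition'', verify existence by direct algebraic cancellation, then establish uniqueness by exploiting palindromicity. The convention (standard in the references cited just before the statement) is that $f(x) = a(x) + xb(x)$ is called a \emph{symmetric decomposition} when $x^n a(1/x) = a(x)$ and $x^{n-1} b(1/x) = b(x)$, i.e., $a$ is palindromic of length $n+1$ and $b$ is palindromic of length $n$.

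For existence, I would first observe that both numerators in the claimed formulas vanish at $x = 1$ (since $f(1) - 1\cdot f(1) = 0$ in each case), so dividing by $1-x$ leaves genuine polynomials. A one-line computation then confirms
\[
a(x) + xb(x) = \frac{\bigl(f(x) - x^{n+1}f(1/x)\bigr) + \bigl(x^{n+1}f(1/x) - xf(x)\bigr)}{1-x} = f(x).
\]
The palindromicity of $a$ follows from the direct check that under $x \mapsto 1/x$ the numerator $f(x) - x^{n+1}f(1/x)$ transforms in a way that, after multiplication by $x^n$ and clearance of the $1 - 1/x$ denominator, returns $a(x)$ itself; an identical calculation handles $b(x)$.

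For uniqueness, suppose $f(x) = a_1(x) + xb_1(x) = a_2(x) + xb_2(x)$ are two symmetric decompositions. Set $A = a_1 - a_2$ and $B = b_2 - b_1$, so that $A(x) = xB(x)$ and both $A, B$ inherit the respective palindromicity relations. Then
\[
A(x) = x^n A(1/x) = x^n \cdot \frac{1}{x}\, B(1/x) = x^{n-1} B(1/x) = B(x),
\]
and combining this with $A = xB$ yields $(1-x) B(x) = 0$, forcing $B \equiv 0$ and therefore $A \equiv 0$. Hence $a_1 = a_2$ and $b_1 = b_2$, completing the argument.

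The entire argument is elementary, so there is no substantive obstacle; the only point demanding a moment's care is pinning down the convention for the symmetry degrees (length $n+1$ for $a$, length $n$ for $b$), since without this the uniqueness assertion would be vacuous. A built-in sanity check is that $\deg(a + xb) \le \max(n,\, 1 + (n-1)) = n$, consistent with $\deg f = n$.
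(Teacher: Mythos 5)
Your proof is correct, and the conventions you fix (namely $x^na(1/x)=a(x)$ and $x^{n-1}b(1/x)=b(x)$, without which uniqueness would indeed fail) are exactly the ones intended here. The paper states Proposition~\ref{prop01} without proof, citing \cite{Beck2010,Branden18}, and your argument --- direct cancellation for existence, palindromicity checks for symmetry, and the $A=xB$, $A=B$, hence $(1-x)B=0$ computation for uniqueness --- is precisely the standard verification found in those references, so there is nothing to flag.
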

From~\eqref{ax-bx-prop01}, we see that $\deg a(x)=n$ and $\deg b(x)\leq n-1$. In particular, if $f(x)$ is symmetric, then $a(x)=f(x)$ and $b(x)=0$.
Note that $a(x)$ and $b(x)$ are both symmetric.
We call the ordered pair of polynomials $(a(x),b(x))$ the {\it symmetric decomposition} of $f(x)$.
The symmetric decomposition of polynomials was recently studied in Ehrhart theory.
The reader is referred to~\cite[Remark~4.1]{Solus19}
for the combinatorial interpretation of the symmetric decomposition of $h^*$-polynomials of lattice polytopes.

As pointed out by Br\"and\'en and Solus~\cite{Branden18},
the polynomial $f(x)$ is alternatingly increasing if and only if the pair of polynomials in its symmetric decomposition are both unimodal
and have nonnegative coefficients.
Br\"and\'en and Solus~\cite{Branden18} related the alternatingly increasing property to real-rootedness of
the symmetric decomposition.
This paper presents the continuation of studies on alternatingly increasing property and we relate it to bi-$\gamma$-positivity.
\begin{definition}
Let $(a(x),b(x))$ be the symmetric decomposition of the polynomial $f(x)$. If $a(x)$ and $b(x)$ are both $\gamma$-positive, then we say that
$f(x)$ is {\it bi-$\gamma$-positive}.
\end{definition}
It is clear that if a polynomial is bi-$\gamma$-positive, then it is alternatingly increasing.
Moreover, any $\gamma$-positive polynomial is also bi-$\gamma$-positive. The reader is referred to~\cite{Athanasiadis14,Athanasiadis18} for some examples of bi-$\gamma$-positive polynomials.

The main technique used in this paper is the context-free grammar.
Let $V$ be an alphabet whose letters are regarded as independent commutative
indeterminates. Following Chen~\cite{Chen93},
a {\it context-free grammar} $G$ over $V$ is a set of substitution rules replacing a variable in $V$ by
a formal function of variables in $V$.
The formal derivative $D:=D_G$ with respect to $G$ is defined as a linear operator
such that each substitution rule is treated as the common differential rule. For two formal functions $u$ and $v$,
we have $D(u+v)=D(u)+D(v),~D(uv)=D(u)v+uD(v)$.
For a constant $c$, we have $D(c)=0$.
\begin{example}[{\cite{Chen93}}]
If $G=\{x\rightarrow xy, y\rightarrow y\}$. Then $D_G(x)=xy,D_G(y)=y,D_G^2(x)=x(y+y^2)$.
In general, we have
$D_G^n(x)=x\sum_{k=0}^n\Stirling{n}{k}y^k$,
where $\Stirling{n}{k}$ is the Stirling number of the second kind.
\end{example}

Context-free grammar is a powerful tool to study exponential structure, and can be used to deduce convolution formulas and exponential generating functions of enumerative polynomials (see~\cite{Chen17,Fu18}).
Very recently, by using the theory of context-free grammars, we studied $\gamma$-positivity of several descent-type polynomials~\cite{Ma19}.
%
\section{Proof of Theorem~\ref{unimodal}}\label{Section3}
\subsection{A grammatical labeling of signed multipermutations}
\hspace*{\parindent}%

Following~\cite{Chen17}, a {\it grammatical labeling} is an assignment of the underlying elements of a combinatorial structure
with variables, which is consistent with the substitution rules of a grammar.
In the following discussion, we assume that signed multipermutations are appended by 0.
Let $N(\sigma)$ be the number of negative entries of $\sigma$. Define
$$\des^*(\sigma)=\des_B(\sigma0),~\asc^*(\sigma)=\asc_B(\sigma0).$$

\begin{lemma}\label{lemma-signed}
Let $G_1=\{x\rightarrow w,~y\rightarrow w\}$
and
\begin{equation*}\label{G2-q}
G_2=\left\{x\rightarrow \frac{(1+q)^2x^2y^2}{2w},~y\rightarrow\frac{(1+q)^2x^2y^2}{2w},~w\rightarrow xy(q(x+y)+(1+q^2)y)\right\}.
\end{equation*}
Then we have
\begin{equation*}
(D_2D_1)^n(x)=\sum_{\sigma \in\cc_n^{\pm}}x^{\des^*(\sigma)}y^{\asc^*(\sigma)+\plat(\sigma)}q^{N(\sigma)},
\end{equation*}
\begin{equation*}
D_1(D_2D_1)^{n}(x)=w\sum_{\sigma \in\D_n^{\pm}}x^{\des^*(\sigma)-1}y^{\asc^*(\sigma)+\plat(\sigma)-1}q^{N(\sigma)}.
\end{equation*}
\end{lemma}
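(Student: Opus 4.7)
The plan is to prove both identities simultaneously by induction on $n$, via a grammatical labeling of extended signed multipermutations. Given $\sigma\in\cc_n^{\pm}$, after prepending and appending a $0$, label each of the $2n+1$ resulting gaps by $x$ if it is a descent and by $y$ if it is an ascent or a plateau, and attach a factor $q$ for each negative entry of $\sigma$; the product of these labels is precisely $x^{\des^*(\sigma)}y^{\asc^*(\sigma)+\plat(\sigma)}q^{N(\sigma)}$. For $\tau\in\D_n^{\pm}$ the positive entry $n+1$ is always flanked by a $y$-gap on its left and an $x$-gap on its right; merging these two adjacent labels into a single symbol $w$ yields the monomial $w\cdot x^{\des^*(\tau)-1}y^{\asc^*(\tau)+\plat(\tau)-1}q^{N(\tau)}$ summed on the right-hand side of the second identity.

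The base case $n=1$ is immediate: $D_1(x)=w$ matches the unique element $1\in\D_0^{\pm}$, whose $w$-merged monomial is $w$, and $(D_2D_1)(x)=D_2(w)=qx^2y+(1+q+q^2)xy^2$ matches the enumeration over the four elements of $\cc_1^{\pm}$. For the inductive step the second identity follows from the first by interpreting the rule $D_1:\,x,y\mapsto w$ combinatorially: selecting any gap of $\sigma\in\cc_n^{\pm}$ and inserting $+(n+1)$ there converts the chosen label $L\in\{x,y\}$ into the new ascent-descent pair flanking $+(n+1)$, which is precisely the $w$-merge; summing over the $2n+1$ gap-choices reproduces $w\sum_{\tau\in\D_n^{\pm}}(\cdots)$ because the $x$- and $y$-exponents transform correctly in both cases $L=x$ and $L=y$.

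To promote the first identity from level $n$ to level $n+1$, apply $D_2$ to the $\D_n^{\pm}$-sum and read the product rule as two disjoint operations generating $\sigma'\in\cc_{n+1}^{\pm}$. Same-gap case: the term $D_2(w)\cdot(\cdots)$ places the second copy of $\pm(n+1)$ immediately adjacent to the existing $n+1$; a direct computation of the three new gap labels for each of the four sign pairs $(\epsilon_1,\epsilon_2)\in\{+,-\}^2$ gives the four monomials $xy^2$, $qx^2y$, $qxy^2$, $q^2xy^2$, matching $D_2(w)$ term for term. Distinct-gaps case: the terms $w\cdot D_2(x)$ and $w\cdot D_2(y)$ place the second copy at a gap different from the $w$-gap; the $1/w$ inside $D_2(x),D_2(y)$ cancels the external $w$ and reinstates the adjacent $y$ and $x$ labels as ordinary gap labels, the factor $(1+q)^2$ records the two independent sign choices (each insertion satisfies the local rule $L\mapsto yx+q\,xy=(1+q)xy$), and the factor $\tfrac12$ corrects the ordered double-counting that arises because the composition $D_2D_1$ produces each unordered pair of distinct gaps in two orders. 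Summing both cases rebuilds $\sum_{\sigma'\in\cc_{n+1}^{\pm}}(\cdots)$.

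The principal obstacle is the correct reading of Case B: the term $q^2xy^2$ in $D_2(w)$ formally corresponds to a pair of negative inserted copies, which conflicts with $\tau\in\D_n^{\pm}$ literally carrying $+(n+1)$. The reconciling viewpoint is that the $w$-symbol is a formal placeholder for the marked site whose sign commitment is deferred, and the rule $D_2(w)$ simultaneously assigns both signs of the adjacent pair so that all four sign configurations arise in the generated $\sigma'$. Verifying that this bookkeeping, together with the $\tfrac12$ correction in Case A, counts each $\sigma'\in\cc_{n+1}^{\pm}$ exactly once, and handling the boundary gaps flanking the prepended and appended $0$'s uniformly with the internal gaps, constitutes the bulk of the case analysis.
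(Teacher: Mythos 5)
Your proposal is correct and follows essentially the same route as the paper's proof: a grammatical labeling of the gaps (with $q$ marking negative entries and $w$ marking the site of the current maximum), induction on $n$, with $D_1$ realized as insertion of the new positive maximum and the three terms of $D_2$ realized as adjacent insertion versus insertion at a distinct descent or ascent/plateau gap, the factor $(1+q)^2$ accounting for the independent sign choices and the factor $\tfrac12$ for the unordered-pair double count. Your ``deferred sign'' reading of the placeholder $w$ is exactly the paper's device of optionally replacing the existing copy of $n$ by $\overline{n}$ before inserting the second copy.
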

\begin{proof}
We now introduce a grammatical labeling of signed multipermutations.
We first put a subscript label $q$ right after each negative entry of a signed permutation.
For $\sigma\in\cc^{\pm}_n$, we label a descent of $\sigma$ by $x$ and label an ascent or a plateau by $y$.
Thus the weight of $\sigma\in\cc_n^{\pm}$ is given by $$W(\sigma)=x^{\des^*(\sigma)}y^{\asc^*(\sigma)+\plat(\sigma)}q^{N(\sigma)}.$$
Note that $\cc^{\pm}_1=\{^y1^y1^x,^y1^x\overline{1}_q^y,^x\overline{1}_q^y1^x,^x\overline{1}_q^y~\overline{1}^y_q\}$.
Clearly, the sum of weights of the elements in $\cc^{\pm}_1$ is given by $D_2D_1(x)$.

A grammatical labeling of $\sigma'\in\D_n^{\pm}$ is given as follows.
If $i$ is a descent and $\sigma'_i\neq n+1$, then put a label $x$ right after $\sigma'_i$.
If $i$ a plateau or an ascent and $\sigma'_{i+1}\neq n+1$, then put a label $y$ right after $\sigma'_i$.
If $\sigma'_i= n+1$, then put a label $w$ above $\sigma'_i$.
Hence the weight of $\sigma'\in\D^{\pm}_n$ is given by
$$W(\sigma')=wx^{\des^*(\sigma)-1}y^{\plat(\sigma)+\asc^*(\sigma)-1}q^{N(\sigma)}.$$
For example, ${\overset{w}2}\overline{1}_q^y1^x\in\D^{\pm}_1$.
It is easy to verify that
\begin{align*}
D_2D_1(x)&=xy(q(x+y)+(1+q^2)y,\\
D_1(D_2D_1(x))&=w(q(x+y)^2+(1+q)^2xy+y(1+q^2)(x+y)).
\end{align*}
Note that the sum of weights of the elements in $\D^{\pm}_1$ is given by $D_1(D_2D_1(x))$.
Then the result holds for $n=1$. We proceed by induction on $n$. Assume that the results hold for $n-1$, where $n\geq 2$.
Now we insert the entry $n$ into $\sigma\in\cc^{\pm}_{n-1}$.
Note that the insertion of $n$ corresponds to one substitution rule in $G_1$, since we always replace $x$ or $y$ by $w$.
By definition, we see that the action of $D_1$ on elements of $\cc^{\pm}_{n-1}$ generates all the elements in $\D^{\pm}_{n-1}$, and each element of $\D^{\pm}_{n-1}$ can be generated exactly once. Let $\sigma\in\D^{\pm}_{n-1}$, and
suppose that $\sigma_i=n$.
Now we insert $n$ or $\overline{n}$ into $\sigma$.
We distinguish three cases:
\begin{itemize}
  \item[\rm($c_1$)]We can insert $n$ or $\overline{n}$ right after $\sigma_i$, or we can first replace $\sigma_i$ by $\overline{n}$, and then
insert $n$ or $\overline{n}$ right after $\overline{n}$. In this case, the
insertion corresponds to applying the substitution rule $w\rightarrow xy(q(x+y)+(1+q^2)y)$;
 \item[\rm($c_2$)]Suppose that $j$ is a descent, where $|i-j|\geq2$.
Then we can insert $n$ or $\overline{n}$ right after $\sigma_j$, or we can first replace $\sigma_i$ by $\overline{n}$, and then
insert $n$ or $\overline{n}$ right after $\sigma_j$. It should be noted that we can get the same signed permutation if the first $n$ in the $(j+1)$th position of $\sigma$.
In this case, the
insertion corresponds to applying the substitution rule $x\rightarrow \frac{(1+q)^2x^2y^2}{2w}$;
\item[\rm($c_3$)]Suppose that $j$ is an ascent or a plateau, where $|i-j|\geq2$. Then we can insert $n$ or $\overline{n}$ right after $\sigma_j$, or we can first replace $\sigma_i$ by $\overline{n}$, and then insert $n$ or $\overline{n}$ right after $\sigma_j$. Similarly, we can get the same signed permutation if the first $n$ in the $(j+1)$th position of $\sigma$.
In this case, the
insertion corresponds to applying the substitution rule $y\rightarrow \frac{(1+q)^2x^2y^2}{2w}$.
\end{itemize}
By induction, it is routine to check that
the action of $D_2$ on elements of $\D^{\pm}_{n-1}$ generates all the elements in $\cc^{\pm}_{n}$.
This completes the proof.
\end{proof}

When $q=0$, lemma~\ref{lemma-signed} reduces to the following lemma.
\begin{lemma}\label{lemma02}
Let $G_3=\{x\rightarrow w,~y\rightarrow w\}$
and $G_4=\left\{x\rightarrow \frac{x^2y^2}{2w},~y\rightarrow \frac{x^2y^2}{2w},~w\rightarrow xy^2\right\}$.
Then we have
\begin{equation}\label{Pnk-grammar}
(D_4D_3)^n(x)=x\sum_{\sigma\in\cc_n}x^{\des(\sigma)}y^{2n-\des(\sigma)}=x\sum_{k=0}^{2n-2}P_{n,k}x^{k}y^{2n-k},
\end{equation}
\begin{equation}\label{Qnk-grammar}
D_3(D_4D_3)^n(x)=w\sum_{\sigma\in\D_{n}}x^{\des(\sigma)}y^{2n-\des(\sigma)}=w\sum_{k=0}^{2n-1}Q_{n,k}x^{k}y^{2n-k}.
\end{equation}
\end{lemma}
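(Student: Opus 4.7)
The plan is to obtain Lemma~\ref{lemma02} as the $q=0$ specialization of Lemma~\ref{lemma-signed}, so no fresh grammatical induction or labeling is required. I would first check that the two grammar systems coincide at $q=0$: the rules $x\mapsto \frac{(1+q)^2 x^2y^2}{2w}$ and $y\mapsto \frac{(1+q)^2 x^2y^2}{2w}$ collapse to $x\mapsto \frac{x^2y^2}{2w}$ and $y\mapsto \frac{x^2y^2}{2w}$, while $w\mapsto xy(q(x+y)+(1+q^2)y)$ collapses to $w\mapsto xy^2$; together with the trivial identity $G_1=G_3$, this gives the operator identities $(D_2D_1)^n(x)|_{q=0}=(D_4D_3)^n(x)$ and $D_1(D_2D_1)^n(x)|_{q=0}=D_3(D_4D_3)^n(x)$.

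On the right-hand sides of Lemma~\ref{lemma-signed}, setting $q=0$ annihilates every summand with $N(\sigma)\geq 1$ through the factor $q^{N(\sigma)}$. Thus only signed permutations with no negative entries survive, and these are precisely the unsigned multipermutations in $\cc_n$ and $\D_n$.

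The remaining step is to translate the boundary-sensitive statistics $\des^*,\asc^*$ back into the ordinary $\des,\asc,\plat$. For $\sigma\in\cc_n$ of length $2n$, every $\sigma_i$ is strictly positive, so in $\sigma_0\sigma_1\cdots\sigma_{2n}\,0$ (with $\sigma_0=0$) position $0$ is always an ascent and position $2n$ is always a descent. Hence $\des^*(\sigma)=\des(\sigma)+1$ and $\asc^*(\sigma)+\plat(\sigma)=2n-\des(\sigma)$, and substituting gives $(D_4D_3)^n(x)=\sum_{\sigma\in\cc_n}x^{\des(\sigma)+1}y^{2n-\des(\sigma)}=x\sum_{\sigma\in\cc_n}x^{\des(\sigma)}y^{2n-\des(\sigma)}$. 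An identical boundary calculation on $\sigma\in\D_n$ of length $2n+1$ yields $\des^*(\sigma)-1=\des(\sigma)$ and $\asc^*(\sigma)+\plat(\sigma)-1=2n-\des(\sigma)$, which turns the other identity of Lemma~\ref{lemma-signed} into $w\sum_{\sigma\in\D_n}x^{\des(\sigma)}y^{2n-\des(\sigma)}$.

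I do not anticipate any serious obstacle; the only point that requires care is the bookkeeping of the two boundary positions contributed by $\sigma_0=0$ and the appended $0$, which is exactly what produces the leading factors $x$ and $w$ in the two stated identities of Lemma~\ref{lemma02}.
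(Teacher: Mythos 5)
Your proposal is correct and follows exactly the paper's route: the paper introduces Lemma~\ref{lemma02} with the single remark that it is the $q=0$ specialization of Lemma~\ref{lemma-signed}, which is precisely your argument. Your explicit bookkeeping of the boundary positions (position $0$ always an ascent, the final position always a descent, giving $\des^*(\sigma)=\des(\sigma)+1$ and $\asc^*(\sigma)+\plat(\sigma)=2n-\des(\sigma)$) correctly supplies the details the paper leaves implicit.
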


When $q=1$, lemma~\ref{lemma-signed} reduces to the following lemma.
\begin{lemma}\label{lemma03}
Let
$G_5=\{x\rightarrow w,~y\rightarrow w\}$
and $$G_6=\left\{x\rightarrow \frac{2x^2y^2}{w},~y\rightarrow \frac{2x^2y^2}{w},~w\rightarrow xy(x+3y)\right\}.$$
Then we have
\begin{equation}\label{recu-signed03}
(D_6D_5)^n(x)=\sum_{\sigma \in\cc_n^{\pm}}x^{\des^*(\sigma)}y^{\asc^*(\sigma)+\plat(\sigma)},
\end{equation}
\begin{equation}\label{recu-signed04}
D_5(D_6D_5)^{n}(x)=w\sum_{\sigma \in\D_n^{\pm}}x^{\des^*(\sigma)-1}y^{\asc^*(\sigma)+\plat(\sigma)-1}.
\end{equation}
\end{lemma}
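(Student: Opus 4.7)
The plan is to obtain Lemma~\ref{lemma03} as a direct specialization of Lemma~\ref{lemma-signed} at $q=1$. The grammars $G_5,G_6$ appearing in the statement are visibly the $q=1$ evaluations of $G_1,G_2$, so the proof should reduce to an elementary algebraic check together with the observation that substitution in $q$ commutes with the formal derivatives $D_1,D_2$.

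First I would verify that $G_5=G_1|_{q=1}$ and $G_6=G_2|_{q=1}$. For $G_5$ there is nothing to do, since $G_1$ does not involve $q$. For $G_6$ I would simplify the three production rules of $G_2$ at $q=1$:
\begin{align*}
\frac{(1+q)^2x^2y^2}{2w}\bigg|_{q=1} &= \frac{4x^2y^2}{2w}=\frac{2x^2y^2}{w},\\
xy\bigl(q(x+y)+(1+q^2)y\bigr)\bigg|_{q=1} &= xy\bigl((x+y)+2y\bigr)=xy(x+3y),
\end{align*}
matching the productions of $G_6$ exactly. Consequently, on polynomials in $x,y,w$, the derivatives satisfy $D_5=D_1|_{q=1}$ and $D_6=D_2|_{q=1}$.

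Next I would note that $q$ appears only as a coefficient in $G_1$ and $G_2$; no production acts on $q$, so $D_1$ and $D_2$ treat $q$ as an inert constant. Therefore evaluation at $q=1$ commutes with any finite composition of $D_1$ and $D_2$, giving
\begin{equation*}
(D_6D_5)^n(x)=(D_2D_1)^n(x)\big|_{q=1},\qquad D_5(D_6D_5)^n(x)=D_1(D_2D_1)^n(x)\big|_{q=1}.
\end{equation*}
Applying these identities to the two conclusions of Lemma~\ref{lemma-signed} and using $q^{N(\sigma)}|_{q=1}=1$ for every $\sigma$ yields (\ref{recu-signed03}) and (\ref{recu-signed04}).

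There is essentially no serious obstacle: the only thing to double-check is the routine simplification $(1+q)^2/2=2$ and $q(x+y)+(1+q^2)y=x+3y$ at $q=1$, and the (trivial) fact that $q$ plays no role in the substitution rules so the specialization commutes with iterated differentiation.
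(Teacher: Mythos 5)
Your proposal is correct and matches the paper exactly: the paper introduces Lemma~\ref{lemma03} with the single remark that it is the $q=1$ specialization of Lemma~\ref{lemma-signed}, which is precisely the reduction you carry out. Your explicit verification that $G_6=G_2|_{q=1}$ and that evaluation at $q=1$ commutes with the derivatives simply fills in the routine details the paper leaves implicit.
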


\subsection{Eulerian polynomials for multipermutations}
\hspace*{\parindent}%

In the following, we first give another proof of Proposition~\ref{Pnxgamma}.

\begin{proof}[A direct proof of Proposition~\ref{Pnxgamma}]
Let $G_3$ and $G_4$ be the grammars given in Lemma~\ref{lemma02}.
Note that $D_3(x)=w,~D_4D_3(x)=xy^2,~D_3(D_4D_3(x))=w(xy+y^2+xy)$.
Assume that there are nonnegative integers $p_{n,k}$ such that
\begin{equation}\label{Pnk-grammar02}
(D_4D_3)^n(x)=xy^2\sum_{k=0}^{2n-2}p_{n,k}(xy)^{k}(x+y)^{2n-2-2k}.
\end{equation}
Then we have
\begin{align*}
&(D_4D_3)^{n+1}(x)\\
&=D_4\left(w\sum_{k}p_{n,k}\left((k+1)x^{k}y^{k+1}(x+y)^{2n-1-2k}+x^{k+1}y^{k+1}(x+y)^{2n-2-2k}\right)\right)+\\
&D_4\left(2w\sum_{k}p_{n,k}(2n-2-2k)x^{k+1}y^{k+2}(x+y)^{2n-3-2k}\right)\\
&=xy^2\sum_{k}p_{n,k}\left((k+1)x^{k}y^{k+1}(x+y)^{2n-1-2k}+x^{k+1}y^{k+1}(x+y)^{2n-2-2k}\right)+\\
&xy^2\sum_{k}p_{n,k}2(2n-2-2k)x^{k+1}y^{k+2}(x+y)^{2n-3-2k}+\\
&xy^2\sum_{k}p_{n,k}\left(\frac{(k+1)k}{2}x^ky^{k+1}(x+y)^{2n-1-2k}+\frac{(k+1)^2}{2}x^{k+1}y^k(x+y)^{2n-1-2k}\right)+\\
&xy^2\sum_{k}p_{n,k}(k+1)(2n-1-2k)x^{k+1}y^{k+1}(x+y)^{2n-2-2k}+\\
&xy^2\sum_{k}p_{n,k}\left(\frac{k+1}{2}x^{k+1}y^{k+1}(x+y)^{2n-2-2k}+\frac{k+1}{2}x^{k+2}y^k(x+y)^{2n-2-2k}\right)+\\
&xy^2\sum_{k}p_{n,k}(2n-2-2k)x^{k+2}y^{k+1}(x+y)^{2n-3-2k}+\\
&xy^2\sum_{k}p_{n,k}(2n-2-2k)(x+y)^{2n-3-2k}\left((k+1)x^{k+1}y^{k+2}+(k+2)x^{k+2}y^{k+1}\right)+\\
&xy^2\sum_{k}p_{n,k}2(2n-2-2k)(2n-3-2k)x^{k+2}y^{k+2}(x+y)^{2n-4-2k}.
\end{align*}
Let
\begin{align*}
I_1&=xy^2\sum_{k}p_{n,k}(x+y)^{2n-2-2k}(x^{k+1}y^{k+1}+(k+1)(2n-1-2k)x^{k+1}y^{k+1})+\\
&xy^2\sum_{k}p_{n,k}2(2n-2-2k)(2n-3-2k)x^{k+2}y^{k+2}(x+y)^{2n-4-2k},\\
I_2&=xy^2\sum_{k}p_{n,k}(2n-2-2k)(2x^{k+1}y^{k+2}(x+y)^{2n-3-2k}+x^{k+2}y^{k+1}(x+y)^{2n-3-2k})+\\
&xy^2\sum_{k}p_{n,k}(2n-2-2k)(x+y)^{2n-3-2k}((k+1)x^{k+1}y^{k+2}+(k+2)x^{k+2}y^{k+1}),\\
I_3&=xy^2\sum_{k}p_{n,k}(k+1)x^{k}y^{k+1}(x+y)^{2n-1-2k}+\\
&\frac{1}{2}xy^2\sum_{k}p_{n,k}({(k+1)^2}x^{k+1}y^k(x+y)^{2n-1-2k}+{(k+1)k}x^ky^{k+1}(x+y)^{2n-1-2k})+\\
&\frac{1}{2}xy^2\sum_{k}p_{n,k}({(k+1)}x^{k+2}y^k(x+y)^{2n-2-2k}+{(k+1)}x^{k+1}y^{k+1}(x+y)^{2n-2-2k}).
\end{align*}
Then $(D_4D_3)^{n+1}(x)=I_1+I_2+I_3$.
Combining like terms, we get that
\begin{align*}
I_1&=xy^2\sum_{k}p_{n,k}(1+(k+1)(2n-1-2k))x^{k+1}y^{k+1}(x+y)^{2n-2-2k}+\\
&xy^2\sum_{k}p_{n,k}2(2n-2-2k)(2n-3-2k)x^{k+2}y^{k+2}(x+y)^{2n-4-2k},\\
I_2
&=xy^2\sum_{k}p_{n,k}(2n-2-2k)(k+3)x^{k+1}y^{k+1}(x+y)^{2n-2-2k},\\
I_3
&=xy^2\sum_{k}p_{n,k}\binom{k+2}{2}x^{k}y^k(x+y)^{2n-2k}.
\end{align*}
Comparing the coefficients of $xy^2(xy)^{k}(x+y)^{2n-2k}$ and simplifying yields
\begin{equation*}\label{pnk-recu01}
p_{n+1,k}=\binom{k+2}{2}p_{n,k}+(k+1)(4n-4k+1)p_{n,k-1}+4\binom{2n-2k+2}{2}p_{n,k-2},
\end{equation*}
with $p_{1,0}=1$ and $p_{1,k}=0$ for $k\geq 1$. Thus the expansion~\eqref{Pnk-grammar02} holds for $n+1$.
Comparing~\eqref{Pnk-grammar} with~\eqref{Pnk-grammar02},
we obtain
$$P_n(x)=\sum_{k=0}^{n-1}p_{n,k}x^{k}(1+x)^{2n-2-2k},$$ and so $P_n(x)$ is $\gamma$-positive. This completes the proof.
\end{proof}

\begin{theorem}\label{multipermu01}
For any $n\geq 1$, the polynomial $Q_n(x)$ is bi-$\gamma$-positive and so it is unimodal with mode in the middle.
\end{theorem}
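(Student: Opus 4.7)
The plan is to exhibit an explicit symmetric decomposition $Q_n(x) = a_n(x) + x b_n(x)$ and to verify that both $a_n$ and $b_n$ are $\gamma$-positive. The starting point is the identity
\[
Q_n(x) = (1 + 2nx) P_n(x) + x(1-x) P_n'(x)
\]
from Theorem~\ref{recurrencesystem}, together with the fact, due to Proposition~\ref{Pnxgamma}, that $P_n(x)$ is symmetric of degree $2n-2$ and $\gamma$-positive.

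First I would apply the formulas of Proposition~\ref{prop01} to $f = Q_n$ (of degree $2n-1$) to compute $a_n$ and $b_n$ in closed form. Substituting the expression for $Q_n$ and repeatedly using the palindromy $x^{2n-2} P_n(1/x) = P_n(x)$, together with its differentiated consequence $x^{2n-2} P_n'(1/x) = (2n-2) x P_n(x) - x^2 P_n'(x)$, I expect the $(1-x)$ denominators to cancel cleanly and leave
\[
b_n(x) = P_n(x), \qquad a_n(x) = \bigl(1 + (2n-1)x\bigr) P_n(x) + x(1-x) P_n'(x).
\]
The same palindromy identities then give $a_n(x) = x^{2n-1} a_n(1/x)$, confirming that $a_n$ is symmetric of degree $2n-1$ and that $(a_n, b_n)$ really is the symmetric decomposition of $Q_n$.

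Next, $b_n = P_n$ is $\gamma$-positive by Proposition~\ref{Pnxgamma}. For $a_n$ the key step is a single-term identity: writing $P_n(x) = \sum_{k=0}^{n-1} \gamma_k\, x^k (1+x)^{2n-2-2k}$ with $\gamma_k \ge 0$ and applying the operator $L := \bigl(1 + (2n-1)x\bigr) + x(1-x)\partial_x$ basis-vector by basis-vector, I expect the clean formula
\[
L\bigl(x^k(1+x)^{2n-2-2k}\bigr) = x^k (1+x)^{2n-3-2k}\Bigl[(k+1)(1+x)^2 + 4(n-k-1)\,x\Bigr].
\]
This rewrites each contribution as a nonnegative combination of $x^k(1+x)^{2n-1-2k}$ and $x^{k+1}(1+x)^{2n-3-2k}$, so regrouping gives
\[
a_n(x) = \sum_{j=0}^{n-1} \tilde{\gamma}_j\, x^j (1+x)^{2n-1-2j}, \qquad \tilde{\gamma}_j = (j+1)\gamma_j + 4(n-j)\gamma_{j-1} \ge 0,
\]
with the convention $\gamma_{-1} = 0$. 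Hence $a_n$ is $\gamma$-positive and $Q_n$ is bi-$\gamma$-positive.

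Finally, bi-$\gamma$-positivity forces the alternatingly increasing property, which in turn implies unimodality; the chain $Q_{n,0} \le Q_{n,2n-1} \le Q_{n,1} \le \cdots \le Q_{n,n}$ locates the mode at index $n$, which lies in the middle of the degree-$(2n-1)$ polynomial $Q_n$. I expect the main obstacle to be the explicit verification of the forms of $a_n$ and $b_n$, since the cancellation of $(1-x)$ in Proposition~\ref{prop01} requires careful bookkeeping of $P_n$ and $P_n'$ under $x \mapsto 1/x$; by contrast, the single-term identity for $L$ is a routine expansion.
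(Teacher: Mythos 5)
Your proposal is correct, and it arrives at exactly the same symmetric decomposition as the paper: the paper writes $Q_n(x)=R_n(x)+xP_n(x)$ with $R_n(x)=(1+(2n-1)x)P_n(x)+x(1-x)P_n'(x)$, which is precisely your pair $(a_n,b_n)$, and its final $\gamma$-coefficients $r_{n,k}=(k+1)p_{n,k}+4(n-k)p_{n,k-1}$ coincide with your $\tilde\gamma_j=(j+1)\gamma_j+4(n-j)\gamma_{j-1}$. The difference is entirely in how these facts are obtained. The paper extracts both the decomposition and the $\gamma$-expansion of $R_n$ from the context-free grammar of Lemma~\ref{lemma02}: it reads $R_n$ and $xP_n$ off the two groups of terms in $D_3(D_4D_3)^n(x)$ (so that $R_n$ acquires the combinatorial meaning $\sum_{\sigma\in\D_n,\,\sigma_1<n+1}x^{\des(\sigma)}$), and it obtains the expansion of $R_n$ in the basis $x^k(1+x)^{2n-1-2k}$ by applying $D_3$ to the $\gamma$-form~\eqref{Pnk-grammar02} of $(D_4D_3)^n(x)$. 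You instead work purely algebraically: you compute $(a_n,b_n)$ from the closed formulas of Proposition~\ref{prop01} using the palindromy of $P_n$ (your identity $x^{2n-2}P_n'(1/x)=(2n-2)xP_n(x)-x^2P_n'(x)$ is correct and the $(1-x)$ does cancel, giving $b_n=P_n$), and you verify the single-term identity for $L=(1+(2n-1)x)+x(1-x)\partial_x$ directly, which is indeed a routine expansion: the bracket works out to $(k+1)(1+x^2)+(4n-2k-2)x=(k+1)(1+x)^2+4(n-k-1)x$. Your route is more elementary and self-contained given Proposition~\ref{Pnxgamma} and the recurrence $Q_n(x)=(1+2nx)P_n(x)+x(1-x)P_n'(x)$ (which, to avoid leaning on Theorem~\ref{recurrencesystem} as the paper states it, you should justify by the same insertion argument the paper gives for~\eqref{Tnk-recu}); what it gives up is the combinatorial interpretation of $a_n$ and $b_n$ that the grammatical labeling provides for free.
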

\begin{proof}
It follows from~\eqref{Pnk-grammar} that
$$D_3(D_4D_3)^n(x)=w\sum_{k=0}^{2n-1}P_{n,k}x^ky^{2n-k-1}((k+1)y+(2n-k)x).$$
Define
\begin{equation}\label{Rnkdef}
R_{n,k}=(k+1)P_{n,k}+(2n-k)P_{n,k-1}
\end{equation}
We obtain
\begin{align*}
D_3(D_4D_3)^n(x)&=w\sum_{k=0}^{2n-1}R_{n,k}x^ky^{2n-k}+wx\sum_{k=0}^{2n-1}P_{n,k}x^{k}y^{2n-k-1}.
\end{align*}
Let $R_n(x)=\sum_{k=0}^{2n-1}R_{n,k}x^k$. By using~\eqref{Rnkdef}, we get
$$R_n(x)=(1+(2n-1)x)P_n(x)+x(1-x)P_n'(x).$$
Since $P_n(x)$ is symmetric, it is routine to verify that $R_{n,k}=R_{n,2n-1-k}$.
It follows from~\eqref{Qnk-grammar} that
\begin{equation}\label{QnxRnx}
Q_n(x)=R_n(x)+xP_n(x).
\end{equation}
By using~\eqref{Pnk-grammar02}, we obtain
\begin{align*}
&D_3(D_4D_3)^n(x)=w\sum_{k}p_{n,k}x^{k+1}y^{k+1}(x+y)^{2n-2-2k}+\\
&w\sum_{k}p_{n,k}(x+y)^{2n-3-2k}\left((k+1)x^{k}y^{k+1}(x+y)^{2}+2(2n-2-2k)x^{k+1}y^{k+2}\right).
\end{align*}
Set $r_{n,k}=(k+1)p_{n,k}+4(n-k)p_{n,k-1}$. Then
$$R_n(x)=\sum_{k\geq0}r_{n,k}x^{k}(1+x)^{2n-1-2k}.$$
Therefore, the polynomial $R_n(x)$ is $\gamma$-positive, and so $Q_n(x)$ is bi-$\gamma$-positive.
\end{proof}

Note that $$xP_n(x)=\sum_{\substack{\sigma \in\D_n\\\sigma_{1}=n+1}}x^{\des(\sigma)}.$$
By using~\eqref{QnxRnx}, we immediately get that
$$R_n(x)=\sum_{\substack{\sigma \in\D_n\\\sigma_{1}<n+1}}x^{\des(\sigma)}.$$
\subsection{Eulerian polynomials for signed multipermutations}
\hspace*{\parindent}%

Recall that
\begin{align*}
S_{n,k}&=\#\{\sigma\in \cc^{\pm}_n\mid \des_B(\sigma)=k\},\\
T_{n,k}&=\#\{\sigma\in \D^{\pm}_n\mid \des_B(\sigma)=k\}.
\end{align*}
Motivated by~\eqref{QnxRnx}, it is natural to derive a connection between the numbers $T_{n,k}$ and $S_{n,k}$.
There are two ways such that we can get an element $\sigma\in\D^{\pm}_n$ with $k$ descents from an element $\sigma'\in\cc^{\pm}_n$ by inserting the entry $n+1$.
If $\des(\sigma')=k$, then we can insert $n+1$ at the end of $\sigma'$, or put the entry $n+1$ between two entries that form a descent.
This gives $k+1$ choices for the positions of $n+1$.
If $\des(\sigma')=k-1$, then we can insert the entry $n+1$ into the other $2n-(k-1)$ positions.
So we have
\begin{equation}\label{Tnk-recu}
T_{n,k}=(k+1)S_{n,k}+(2n-k+1)S_{n,k-1}.
\end{equation}
Equivalently,
$$T_n(x)=(1+2nx)S_n(x)+x(1-x)S_n'(x).$$

\begin{theorem}\label{Rnk}
For any $n\geq 1$, the polynomials $S_n(x)$ and $T_n(x)$ are both bi-$\gamma$-positive, and so they are unimodal with modes in the middle.
\end{theorem}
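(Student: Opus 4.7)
The plan is to adapt the grammatical method of Theorem~\ref{multipermu01} to the signed setting using the grammars $G_5, G_6$ from Lemma~\ref{lemma03}. Since $S_n$ and $T_n$ are not symmetric, the single $\gamma$-positive expansion that worked for $P_n$ in Proposition~\ref{Pnxgamma} is replaced by a bivariate \emph{two-piece} expansion of $(D_6 D_5)^n(x)$; setting $x = 1$ then produces the bi-$\gamma$-positive decomposition of $S_n$ directly, and applying $D_5$ before setting $x = 1$ handles $T_n$.

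The main step is to show, by induction on $n$, that
\begin{equation*}
(D_6 D_5)^n(x) = xy^2 \sum_{k \geq 0} a_{n,k} (xy)^k (x+y)^{2n-2-2k} + xy \sum_{k \geq 0} b_{n,k} (xy)^k (x+y)^{2n-1-2k}
\end{equation*}
for some nonnegative integers $a_{n,k}, b_{n,k}$. The base case $n = 1$ is $(D_6 D_5)(x) = xy(x+y) + 2xy^2$, so $a_{1,0} = 2, b_{1,0} = 1$. For the inductive step, apply $D_6 D_5$ to the right-hand side using the Leibniz rule and the substitutions $D_5(x) = D_5(y) = w$, $D_6(x) = D_6(y) = 2x^2 y^2/w$, and $D_6(w) = xy(x+3y)$. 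The $1/w$ factors in $G_6$ cancel against the $w$ produced by $G_5$, and after using $y(y+2x) = y(x+y) + xy$ and $x+3y = (x+y) + 2y$ together with several shift-of-index manipulations, the expression regroups into the same two-piece form. This yields explicit nonnegative linear recurrences for $a_{n+1,k}$ and $b_{n+1,k}$ in terms of $a_{n,\cdot}, b_{n,\cdot}$.

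With the expansion in hand, setting $x = 1$ gives
\begin{equation*}
(D_6 D_5)^n(1, y) = y\bigl[\bar a_n(y) + y\bar b_n(y)\bigr],
\end{equation*}
where $\bar a_n(y) = \sum_k b_{n,k} y^k(1+y)^{2n-1-2k}$ and $\bar b_n(y) = \sum_k a_{n,k} y^k(1+y)^{2n-2-2k}$ are manifestly $\gamma$-positive and symmetric of degrees $2n-1$ and $2n-2$. Combined with the identity $(D_6 D_5)^n(1, y) = y\, S_n(y)$ (verifiable by induction using Lemma~\ref{lemma03} and the recurrences of Theorem~\ref{recurrencesystem}), uniqueness of the symmetric decomposition yields $S_n(y) = \bar a_n(y) + y \bar b_n(y)$, which is the bi-$\gamma$-positive decomposition of $S_n$. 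For $T_n$, a parallel Leibniz calculation gives
\begin{equation*}
D_5 (D_6 D_5)^n(x) = w \Bigl[ y \sum_k c_{n,k} (xy)^k (x+y)^{2n-1-2k} + \sum_k d_{n,k} (xy)^k (x+y)^{2n-2k} \Bigr]
\end{equation*}
with $c_{n,k} = (k+1) a_{n,k} + 2(2n-2k) a_{n,k-1}$ and $d_{n,k} = (k+1) b_{n,k} + a_{n,k-1} + 2(2n+1-2k) b_{n,k-1}$, both manifestly nonnegative. Setting $x = 1$ and using the analogous identity $D_5(D_6 D_5)^n(1, y) = w\, T_n(y)$ produces the bi-$\gamma$-positive decomposition
\begin{equation*}
T_n(y) = \sum_k d_{n,k} y^k(1+y)^{2n-2k} + y \sum_k c_{n,k} y^k(1+y)^{2n-1-2k}.
\end{equation*}

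The hard part will be the inductive step for the key expansion: while each substitution in $G_5, G_6$ is simple in isolation, tracking how the $(xy)^k(x+y)^m$ building blocks transform under $D_6 D_5$ requires careful shift-of-index manipulations and the precise cancellation of $1/w$ factors against the $w$ produced by $D_5$. This is of the same level of algebraic complexity as the inductive step in the proof of Proposition~\ref{Pnxgamma}, just with twice as many coefficient sequences to carry. The subsequent specialisations at $x = 1$ and identifications with $S_n$ and $T_n$ are routine once the expansions are in hand.
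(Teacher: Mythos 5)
Your proposal is correct and follows essentially the same route as the paper: the same grammars $G_5,G_6$, the same two-piece homogeneous expansion of $(D_6D_5)^n(x)$ and $D_5(D_6D_5)^n(x)$ into $(xy)^k(x+y)^m$ blocks established by induction (your $a_{n,k},b_{n,k},c_{n,k},d_{n,k}$ are the paper's $\eta^-_{2n,k},\eta^+_{2n,k},\eta^-_{2n+1,k},\eta^+_{2n+1,k}$, and your stated recurrences match theirs), followed by specialisation and identification with $S_n$ and $T_n$ via the coefficient identities \eqref{snk} and \eqref{tnk}. No substantive difference from the paper's argument.
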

\begin{proof}
Let $G_5$ and $G_6$ be the grammars given in Lemma~\ref{lemma03}.
Note that
$$D_5(x)=w,~D_6D_5(x)=xy(x+3y),~D_5D_6D_5(x))=w(x^2+8xy+3y^2).$$
Recall that the numbers $S_{n,k}$ satisfy the recurrence relation~\eqref{Snk-recu}.
Suppose that
$$(D_6D_5)^n(x)=\sum_{k=0}^{2n-1}\widetilde{S}_{n,k}x^{2n-k}y^{k+1}.$$
Then
\begin{align*}
&D_6(D_5(D_6D_5)^n(x))
=\sum_k {\widetilde{S}}_{n,k}((2n-k)(4n-2k+1)x^{2n-k}y^{k+3}+\\
&\sum_k {\widetilde{S}}_{n,k}((2n-k)(4k+5)+3(k+1))x^{2n-k+1}y^{k+2}+\sum_k {\widetilde{S}}_{n,k}\binom{2k+2}{2}x^{2n-k+2}y^{k+1}.
\end{align*}
Comparing the coefficient of $x^{2n+2-k}y^{k+1}$, we see that $\widetilde{S}_{n,k}$ satisfy the same recurrence relation
and initial conditions as $S_{n,k}$, so they agree.
Therefore,
\begin{equation}\label{snk}
(D_6D_5)^n(x)=\sum_{k=0}^{2n-1}S_{n,k}x^{2n-k}y^{k+1}.
\end{equation}
Note that
$$D_5(D_6D_5)^n(x)=w\sum_kS_{n,k}\left((2n-k)x^{2n-k-1}y^{k+1}+(k+1)x^{2n-k}y^k\right).$$
Then combining~\eqref{Tnk-recu}, we get
\begin{equation}\label{tnk}
D_5(D_6D_5)^n(x)=w\sum_{k=0}^{2n}T_{n,k}x^{2n-k}y^k.
\end{equation}

In the rest of the proof, we shall show the bi-$\gamma$-positivity of $S_n(x)$ and $T_n(x)$.
Note that
\begin{align*}
D_6D_5(x)&=xy(x+y)+2xy^2,\\
D_5(D_6D_5)(x)&=w((x+y)^2+4xy)+2wy(x+y).
\end{align*}
For $m\geq 1$, assume that the following expansions hold for $n=m$:
\begin{equation}\label{recu-signed0001}
\left\{
  \begin{array}{ll}
&(D_6D_5)^n(x)=xy\sum_{k}(xy)^k(x+y)^{2n-2-2k}(\eta^+_{2n,k}(x+y)+y\eta^-_{2n,k}),\\
&D_5(D_6D_5)^n(x)=w\sum_{k}(xy)^k(x+y)^{2n-1-2k}(\eta^+_{2n+1,k}(x+y)+y\eta^-_{2n+1,k}).
  \end{array}
\right.
\end{equation}
We proceed by induction.
Note that
\begin{align*}
&D_5(D_6D_5)^{m}
=w\sum_{k}\eta^+_{2m,k}(xy)^k(x+y)^{2m-2-2k}((k+1)(x+y)^{2}+2(2m-1-2k)xy)+\\
&w\sum_{k}\eta^-_{2m,k}y(xy)^k(x+y)^{2m-3-2k}((k+1)(x+y)^{2}+4(m-1-k)xy)+\\
&w\sum_{k}\eta^-_{2m,k}(xy)^{k+1}(x+y)^{2m-2-2k}.
\end{align*}
Comparing the coefficients of $(xy)^k(x+y)^{2m-2k}$ and $y(xy)^k(x+y)^{2m-1-2k}$, we obtain
\begin{align*}
\eta^+_{2m+1,k}&=(1+k)\eta^+_{2m,k}+2(2m-2k+1)\eta^+_{2m,k-1}+\eta^-_{2m,k-1},\\
\eta^-_{2m+1,k}&=(1+k)\eta^-_{2m,k}+4(m-k)\eta^-_{2m,k-1}.
\end{align*}
with the initial conditions $\eta^+_{1,0}=1,\eta^-_{1,0}=0,\eta^+_{2,0}=1$ and $\eta^-_{2,0}=2$.
Note that
\begin{align*}
&D_6(D_5(D_6D_5)^{m}(x))
=xy\sum_{k}\eta^+_{2m+1,k}(1+2k)x^ky^k(x+y)^{2m+1-2k}+\\
&2xy^2\sum_{k}\eta^+_{2m+1,k}x^ky^k(x+y)^{2m-2k}+\\
&8xy\sum_{k}\eta^+_{2m+1,k}(m-k)x^{k+1}y^{k+1}(x+y)^{2m-1-2k}+\\
&xy^2\sum_{k}\eta^-_{2m+1,k}(3+2k)x^ky^k(x+y)^{2m-2k}+\\
&4xy^2\sum_{k}\eta^-_{2m+1,k}(2m-1-2k)x^{k+1}y^{k+1}(x+y)^{2m-2-2k}.
\end{align*}
Comparing the coefficients of $xy(xy)^k(x+y)^{2m+1-2k}$ and $xy^2(xy)^k(x+y)^{2m-2k}$ in the above expression, we obtain
\begin{align*}
\eta^+_{2m+2,k}&=(1+2k)\eta^+_{2m+1,k}+8(m-k+1)\eta^+_{2m+1,k-1},\\
\eta^-_{2m+2,k}&=(3+2k)\eta^-_{2m+1,k}+4(2m-2k+1)\eta^-_{2m+1,k-1}+2\eta^+_{2m+1,k}.
\end{align*}
%
%
%
%
Thus~\eqref{recu-signed0001} holds for $n=m+1$.
We define
\begin{align*}
S_n^{+}(x)&=\sum_{k}\eta^+_{2n,k}x^k(1+x)^{2n-1-2k},~
S_n^{-}(x)=\sum_{k}\eta^-_{2n,k}x^k(1+x)^{2n-2-2k},\\
T_n^{+}(x)&=\sum_{k}\eta^+_{2n+1,k}x^k(1+x)^{2n-2k},~
T_n^{-}(x)=\sum_{k}\eta^-_{2n+1,k}(xy)^k(1+x)^{2n-1-2k}.
\end{align*}
Combining~\eqref{snk},~\eqref{tnk} and~\eqref{recu-signed0001}, we immediately get that $$S_n(x)=S_n^{+}(x)+xS_n^{-}(x),$$
 $$T_n(x)=T_n^{+}(x)+xT_n^{-}(x).$$
%
%
%
Therefore, the polynomials $S_n(x)$ and $T_n(x)$ are both bi-$\gamma$-positive.
\end{proof}

Combining~\eqref{snk},~\eqref{tnk} and Lemma~\ref{lemma03}, we see that
$$\#\{\sigma\in\cc^{\pm}_n\mid \asc^*(\sigma)+\plat(\sigma)=k+1\}=\#\{\sigma\in\cc^{\pm}_n\mid \des(\sigma)=k\},$$
$$\#\{\sigma\in\D^{\pm}_n\mid \asc^*(\sigma)+\plat(\sigma)=k+1\}=\#\{\sigma\in\D^{\pm}_n\mid \des(\sigma)=k\}.$$
Hence $$S_n(x)=\sum_{\sigma\in\cc^{\pm}_n}x^{\asc^*(\sigma)+\plat(\sigma)-1},~T_n(x)=\sum_{\sigma\in\D^{\pm}_n}x^{\asc^*(\sigma)+\plat(\sigma)-1}.$$
Equivalently, $x^{2n}S_n\left({1}/{x}\right)=\sum_{\sigma\in\cc_n^{\pm}}x^{\des^*(\sigma)}$ and $x^{2n+1}T_n\left({1}/{x}\right)=\sum_{\sigma\in\D_n^{\pm}}x^{\des^*(\sigma)}$.
Hence $$S^+_n(x)=\frac{\sum_{\sigma\in\cc_n^{\pm}}x^{\des(\sigma)}-\sum_{\sigma\in\cc_n^{\pm}}x^{\des^*(\sigma)}}{1-x}=\sum_{\substack{\sigma\in\cc_n^{\pm}\\ \sigma_{2n}>0\\}}x^{\des(\sigma)},$$
$$T^+_n(x)=\frac{\sum_{\sigma\in\D_n^{\pm}}x^{\des(\sigma)}-\sum_{\sigma\in\D_n^{\pm}}x^{\des^*(\sigma)}}{1-x}=\sum_{\substack{\sigma\in\D_n^{\pm}\\ \sigma_{2n}>0\\}}x^{\des(\sigma)}.$$

Therefore, we get the following result.
\begin{corollary}
For $n\geq 1$, we have
$$S^+_n(x)=\sum_{\substack{\sigma\in\cc_n^{\pm}\\ \sigma_{2n}>0\\}}x^{\des(\sigma)},~S^-_n(x)=\sum_{\substack{\sigma\in\cc_n^{\pm}\\ \sigma_{2n}<0\\}}x^{\des(\sigma)-1}.$$
$$T^+_n(x)=\sum_{\substack{\sigma\in\D_n^{\pm}\\ \sigma_{2n}>0\\}}x^{\des(\sigma)},~T^-_n(x)=\sum_{\substack{\sigma\in\D_n^{\pm}\\ \sigma_{2n}<0\\}}x^{\des(\sigma)-1}.$$
\end{corollary}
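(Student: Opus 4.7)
The plan is to derive all four identities from the symmetric decompositions $S_n(x)=S_n^+(x)+xS_n^-(x)$ and $T_n(x)=T_n^+(x)+xT_n^-(x)$ (guaranteed by Proposition~\ref{prop01} and Theorem~\ref{Rnk}) together with the alternative enumerators for $S_n(x)$ and $T_n(x)$ extracted from the grammar in Lemma~\ref{lemma03}. The paragraph immediately preceding the statement carries this out for $S_n^+$ and $T_n^+$; the task is to run the same machinery to completion and read off the four formulas in parallel.

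First I would record the balance identity
\[
\des^*(\sigma)+\asc^*(\sigma)+\plat(\sigma)=2n+1 \quad (\sigma\in\cc_n^{\pm}), \qquad \des^*(\sigma)+\asc^*(\sigma)+\plat(\sigma)=2n+2\quad (\sigma\in\D_n^{\pm}),
\]
which holds because appending $0$ creates $2n+1$ (respectively $2n+2$) consecutive comparisons and $\sigma_{2n}\neq 0$ (respectively $\sigma_{2n+1}\neq 0$), so $\plat(\sigma 0)=\plat(\sigma)$. Combining this with $S_n(x)=\sum_\sigma x^{\asc^*(\sigma)+\plat(\sigma)-1}$ from~\eqref{recu-signed03} and~\eqref{recu-signed04} yields the reversal identities
\[
x^{2n}S_n(1/x)=\sum_{\sigma\in\cc_n^{\pm}}x^{\des^*(\sigma)}, \qquad x^{2n+1}T_n(1/x)=\sum_{\sigma\in\D_n^{\pm}}x^{\des^*(\sigma)}.
\]

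Next I would invoke Proposition~\ref{prop01} using $\deg S_n=2n-1$ and $\deg T_n=2n$ (read off from the initial values) to write
\[
S_n^+(x)=\frac{S_n(x)-x^{2n}S_n(1/x)}{1-x}, \quad S_n^-(x)=\frac{x^{2n-1}S_n(1/x)-S_n(x)}{1-x},
\]
and the analogous pair for $T_n$. The key combinatorial observation is that appending $0$ adds a descent at the final position exactly when the last entry is positive, so for $\sigma\in\cc_n^{\pm}$ one has $\des^*(\sigma)-\des_B(\sigma)=[\sigma_{2n}>0]$, and similarly for $\D_n^{\pm}$ with the last position $\sigma_{2n+1}$ (read as the last entry in the formula of the statement). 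Partitioning the sum by the sign of the final entry, the terms with positive final entry contribute $x^{\des_B(\sigma)}(1-x)$ to the numerator of $S_n^+$ while the negative-final-entry terms contribute zero; the factor $1-x$ then cancels and yields $S_n^+(x)=\sum_{\sigma_{2n}>0}x^{\des_B(\sigma)}$. For $S_n^-$ the roles swap: negative-final-entry terms contribute $x^{\des_B(\sigma)-1}(1-x)$ and positive-final-entry ones vanish, giving $S_n^-(x)=\sum_{\sigma_{2n}<0}x^{\des_B(\sigma)-1}$.

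Finally I would repeat the step verbatim for $T_n$, obtaining the analogous decomposition over $\D_n^{\pm}$ split by the sign of the last entry. The only genuine technical point is the careful bookkeeping of the boundary descent created by appending $0$, which is the sole obstacle; all remaining manipulations are formal algebra with the identities from Proposition~\ref{prop01}.
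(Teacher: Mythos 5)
Your proposal is correct and follows essentially the same route as the paper: the paper likewise combines the grammar identities~\eqref{snk} and~\eqref{tnk} with Lemma~\ref{lemma03} to get $x^{2n}S_n(1/x)=\sum_{\sigma\in\cc_n^{\pm}}x^{\des^*(\sigma)}$ and $x^{2n+1}T_n(1/x)=\sum_{\sigma\in\D_n^{\pm}}x^{\des^*(\sigma)}$, then applies the symmetric-decomposition formulas of Proposition~\ref{prop01} and the observation that appending $0$ adds a descent exactly when the last entry is positive. Your explicit treatment of the balance identity and of the $S_n^-$, $T_n^-$ halves merely writes out steps the paper leaves implicit, and your reading of $\sigma_{2n}$ as the last entry for $\D_n^{\pm}$ correctly resolves the paper's indexing slip.
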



\end{document}